\newtheorem{theorem}{Theorem}[section]
\newtheorem*{theorem*}{Theorem}
\newtheorem{proposition}[theorem]{Proposition}
\newtheorem{definition}[theorem]{Definition}
\newtheorem{remark}[theorem]{Remark}
\newcommand{\R}{\mathbb{R}}
\newcommand{\C}{\mathbb{C}}
\newcommand{\N}{\mathbb{N}}
\newcommand{\B}{\mathbb{B}}
\newcommand{\x}{\textbf {x}}
\newcommand{\T}{\widetilde}
\newcommand{\Sing}[1]{{\rm Sing}(#1)}
\begin{document}

\title[Multiplicity of analytic hypersurface singularities under ...]
{Multiplicity of analytic hypersurface singularities under bi-Lipschitz homeomorphisms}

\author{A. Fernandes}
\author{J. Edson Sampaio}
\address{A. ~Fernandes and J. Edson ~Sampaio - 
Departamento de Matem\'atica, Universidade Federal do Cear\'a Av.
Humberto Monte, s/n Campus do Pici - Bloco 914, 60455-760
Fortaleza-CE, Brazil} 

 \email{alexandre.fernandes@ufc.br}
 \email{edsonsampaio@mat.ufc.br}

\keywords{Bi-Lipschitz, Multiplicity, Zariski's Conjecture}
\subjclass[2010]{14B05; 32S50}
\thanks{The first named author were partially supported by CNPq-Brazil grant 302764/2014-7}

\begin{abstract}
We give partial answers to a metric version of Zariski's multiplicity conjecture. In particular, we prove the multiplicity of complex analytic surface (not necessarily isolated) singularities in $\C^3$ is a bi-Lipschitz invariant.
\end{abstract}

\maketitle
\setcounter{section}{-1}
\section{Introduction}

Let $f\colon (\C^n,0)\to (\C,0)$ be the germ of a reduced holomorphic function at origin. Let $(V(f),0)$ be the germ of the zero set of $f$ at origin. We recall \emph{the multiplicity of} $V(f)$ at origin, denoted by $m(V(f),0)$, is defined as following: we write
$$f=f_m+f_{m+1}+\cdots+f_k+\cdots$$ where each $f_k$ is a homogeneous polynomial of degree $k$ and $f_m\neq 0$. Then, $m(V(f),0):= m.$

In 1971 (see \cite{Zariski:1971}), O. Zariski proposed the following problems:

\begin{enumerate}[leftmargin=0pt]
\item[]{\bf Question A} If there exists a homeomorphism $\varphi\colon (\C^n,V(f),0)\to (\C^n,V(g),0)$, then $m(V(f),0)=m(V(g),0)$?
\item[]{\bf Question B} If there is a homeomorphism $\varphi\colon (\C^n,V(f),0)\to (\C^n,V(g),0)$, then are the projectivised tangent cones of $V(f)$ and $V(g)$, at the origin, homeomorphic?
\end{enumerate}

In 2005 (see \cite{Bobadilla:2005}), J. Bobadilla gave a negative answer to Question B. In fact, Bobadilla showed the existence of a family of function-germs $f_t\colon (\C^5,0)\to (\C,0)$ with isolated singularities such that: for $t\neq 0$, there exists a homeomorphism $\varphi_t\colon (\C^5,V(f_0),0)\to (\C^5,V(f_t),0)$ but the respective projectivised tangent cones are not homeomorphic. Concerning Question A, although many authors have presented several partial results, this question remains open. In order to know more about the Zariski's multiplicity question see, for example, \cite{Ephraim:1976,Eyral:2007,Gau-Lipman:1983,Greuel:1986,OShea:1987} and \cite{Trotman:1977}.

In this paper, we deal with metric aspects related to the above Zariski's questions. More precisely, we consider the question below:

\begin{enumerate}[leftmargin=0pt]
\item[]{\bf Question  \~A1} If there is a bi-Lipschitz homeomorphism $\varphi\colon(\C^n,V(f),0)\to (\C^n,V(g),0)$, then $m(V(f),0)=m(V(g),0)$?
\end{enumerate}

This metric question was approached by some authors even  in a more general setting, however, as far as we know, it remains still open. For instance, G. Comte, in the paper  \cite{Comte:1998}, proved that the multiplicity of complex analytic germs (not necessarily codimension 1 sets) is invariant under bi-Lipschitz homeomorphism with Lipschitz constant close enough to 1. Notice that the bi-Lipschitz homeomorphisms considered by G. Comte have some restrictions. Another result we would like to distinguish here is the bi-Lipschitz invariance of  the multiplicity of normal complex algebraic (not necessarily codimension 1) surface singularities proved by W. Neumann and A. Pichon in the recent preprint \cite{Neumann-Pichon:2014}. See \cite{Chirka:1989} for a definition of multiplicity for higher codimension analytic germs in $\C^n$.

The aim of the present paper is to  give some partial positive answers for Question \~A1.

In Section \ref{section:preliminaries}, we recall the notion of tangent cone, we list some properties of it and we prove that Lelong numbers are bi-Lipschitz invariant. In Section \ref{section:mainresults}, we prove the main results of the paper. Our first result, namely Theorem \ref{main-theorem}, depends on the next variation of Question \~A1.

\begin{enumerate}[leftmargin=0pt]
\item[]{\bf Question  \~A2} Let $f,g\colon \C^n\to \C$ be irreducible homogeneous polynomials. If there is a bi-Lipschitz homeomorphism $\varphi\colon(\C^n,V(f),0)\to (\C^n,V(g),0)$, then $m(V(f),0)=m(V(g),0)$?
\end{enumerate}

Theorem \ref{main-theorem} says: Question \~A1 has positive answer if, and only if, Question \~A2 has positive answer. Finally, in Subsection \ref{subsection:applications}, we do two applications of Theorem \ref{main-theorem}. The first one, Theorem \ref{application1}, shows that Question \~A1 has positive answer for hypersurface singularities whose tangent cones have all irreducible components with isolated singularity at $0$. The last result, Theorem \ref{application2}, proves the bi-Lipschitz invariance of the multiplicity of complex analytic surface (not necessarily isolated) singularities in $\C^3$.

\bigskip

\noindent{\bf Acknowledgements}. We would like to thank L. Birbrair, V. Grandjean and L\^e D.T. for their interest on this work.

\section{Tangent Cones and Lelong numbers}\label{section:preliminaries}
\subsection{Tangent cones}
In this subsection, we set the exact notion of tangent cone that we will use along the paper and we list some of its properties.

\begin{definition}
Let $A\subset \R^{\ell}$ be a subanalytic set such that $x_0\in \overline{A}$.
We say that $v\in \R^{\ell}$ is a tangent vector of $A$ at $x_0\in\R^{\ell}$ if there are a sequence of points $\{x_i\}\subset A\setminus \{x_0\}$ tending to $x_0$ and sequence of positive real numbers $\{t_i\}$ such that 
$$\lim\limits_{i\to \infty} \frac{1}{t_i}(x_i-x_0)= v.$$
Let $C(A,x_0)$ denote the set of all tangent vectors of $A$ at $x_0\in \R^{\ell}$. We call $C(A,x_0)$ the {\bf tangent cone} of $A$ at $x_0$. In case that $x_0=0$, we denote $C(A,0)$ by $C(A)$.
\end{definition}

\begin{remark} {\rm It follows from Curve Selection Lemma for subanalytic sets that, if $A\subset \R^{\ell}$ is a subanalytic set and $x_0\in \overline{A}$ then the following holds true }
$$C(A,x_0)=\{v;\, \exists\, \alpha\colon[0,\varepsilon )\to \R^{\ell}\,\, \mbox{s.t.}\,\, \alpha(0)=x_0,\, \alpha((0,\varepsilon ))\subset A\,\, \mbox{and}\,\, \alpha(t)-x_0=tv+o(t)\}.$$
\end{remark}

\begin{remark}
{\rm If $A\subset \C^n$ is a complex analytic set such that $0\in A$ then $C(A)$ is the zero set of a set of complex homogeneous polynomials (see \cite{Whitney:1972}, Theorem 4D). In particular, $C(A)$ is the union of complex lines passing through the origin $0\in\C^n$.}
\end{remark}

Another way to present the tangent cone of a subset $X\subset\R^{\ell}$ at the origin $0\in\R^{\ell}$ is via the spherical blow-up of $\R^{\ell}$ at the point $0$. Let us consider the {\bf spherical blowing-up} (at origin) of $\R^{\ell}$ 
$$
\begin{array}{ccl}
\rho\colon\mathbb{S}^{\ell-1}\times [0,+\infty )&\longrightarrow & \R^{\ell}\\
(x,r)&\longmapsto &rx
\end{array}
$$

Note that $\rho\colon\mathbb{S}^{\ell-1}\times (0,+\infty )\to \R^{\ell}\setminus \{0\}$ is a homeomorphism with inverse mapping $\rho^{-1}\colon\R^{\ell}\setminus \{0\}\to \mathbb{S}^{\ell-1}\times (0,+\infty )$ given by $\rho^{-1}(x)=(\frac{x}{\|x\|},\|x\|)$. The {\bf strict transform} of the subset $X$ under the spherical blowing-up $\rho$ is $X':=\overline{\rho^{-1}(X\setminus \{0\})}$. The subset $X'\cap (\mathbb{S}^{\ell-1}\times \{0\})$ is called the {\bf boundary} of $X'$ and it is denoted by $\partial X'$. 

\begin{remark}
{\rm If $X\subset \R^{\ell}$ is a subanalytic set, then $\partial X'=\mathbb{S}_0X\times \{0\}$, where $\mathbb{S}_0X=C(X)\cap \mathbb{S}^{\ell-1}$.}
\end{remark}

We finish this subsection reminding the  invariance of the tangent cone under bi-Lipschitz homeomorphisms obtained in the paper \cite{Sampaio:2014}. This result is somehow a positive answer for a metric version of the Zariski's Question B.

\begin{theorem}[Sampaio \cite{Sampaio:2014}, Theorem 3.2]\label{inv_cones}
Let $X,Y\subset\C^n$ be two germs of analytic subsets. If $\varphi\colon(\C^n,X,0)\to (\C^n,Y,0)$ is a bi-Lipschitz homeomorphism, then there is a bi-Lipschitz homeomorphism $d\varphi\colon(\C^n,C(X),0)\to (\C^n,C(Y),0)$.
\end{theorem}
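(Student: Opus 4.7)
The plan is to construct $d\varphi$ as a subsequential limit of the rescalings
\[
\varphi_t\colon \C^n \to \C^n,\qquad \varphi_t(x) := \frac{\varphi(tx)}{t},\qquad t\in (0,\varepsilon).
\]
Since $\varphi$ is bi-Lipschitz with some constants $L^{-1}\le \|\varphi(x)-\varphi(y)\|/\|x-y\|\le L$ and $\varphi(0)=0$, each $\varphi_t$ is bi-Lipschitz with the \emph{same} constants and fixes the origin. In particular the family $\{\varphi_t\}_{t\in(0,\varepsilon)}$ is uniformly Lipschitz, hence equicontinuous, and uniformly bounded on every compact subset of $\C^n$ (because $\|\varphi_t(x)\|\le L\|x\|$). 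The same estimate applies to $(\varphi_t)^{-1}=(\varphi^{-1})_t$.

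By Arzel\`a--Ascoli combined with a diagonal extraction along an exhaustion of $\C^n$ by closed balls, I would choose a sequence $t_k\to 0^+$ along which both $\varphi_{t_k}\to d\varphi$ and $\varphi_{t_k}^{-1}\to \psi$ uniformly on compact sets. Passing to the limit in the two-sided bi-Lipschitz inequality shows that $d\varphi$ is $L$-bi-Lipschitz, and passing to the limit in the identity $\varphi_{t_k}\circ\varphi_{t_k}^{-1}=\id=\varphi_{t_k}^{-1}\circ\varphi_{t_k}$ (legitimate by uniform convergence on compact sets) yields $d\varphi\circ\psi=\id=\psi\circ d\varphi$. Hence $\psi=(d\varphi)^{-1}$ and $d\varphi\colon\C^n\to\C^n$ is a bi-Lipschitz homeomorphism with $d\varphi(0)=0$.

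The core point is then to check $d\varphi(C(X))=C(Y)$; by symmetry it suffices to prove the inclusion $d\varphi(C(X))\subset C(Y)$. Fix $v\in C(X)$. Since $X\subset\C^n$ is complex analytic, it is subanalytic, so by the Curve Selection Lemma (as recorded in the remark after the definition of the tangent cone) there exists a subanalytic arc $\alpha\colon [0,\delta)\to\C^n$ with $\alpha(0)=0$, $\alpha((0,\delta))\subset X$, and $\alpha(t)=tv+o(t)$. Set $\beta:=\varphi\circ\alpha$; then $\beta(0)=0$ and $\beta(t)\in Y$ for all small $t>0$. For every $k$ large enough,
\[
\frac{\beta(t_k)}{t_k}=\frac{\varphi\bigl(t_k\cdot \alpha(t_k)/t_k\bigr)}{t_k}=\varphi_{t_k}\!\left(\frac{\alpha(t_k)}{t_k}\right).
\]
Now $\alpha(t_k)/t_k\to v$ while $\varphi_{t_k}\to d\varphi$ uniformly on a neighbourhood of $v$, so the right-hand side converges to $d\varphi(v)$. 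This exhibits $d\varphi(v)$ as a tangent vector of $Y$ at $0$, i.e.\ $d\varphi(v)\in C(Y)$.

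The main technical obstacle is really step two: securing a \emph{single} subsequence $t_k\to 0^+$ along which $\varphi_{t_k}$ and $\varphi_{t_k}^{-1}$ simultaneously converge, with limits that are actual inverses of one another. The diagonal-Arzel\`a--Ascoli extraction handles the first half, and uniform convergence on compacta is exactly what is needed to take limits in the relations $\varphi_{t_k}\circ\varphi_{t_k}^{-1}=\id$; all the bi-Lipschitz information then transfers to $d\varphi$ by pointwise limits in the defining inequality. Everything else is either a direct Arzel\`a--Ascoli application or the Curve Selection argument above.
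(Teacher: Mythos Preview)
The paper does not give its own proof of this theorem: it is quoted verbatim from \cite{Sampaio:2014} and used as a black box. The only hint the paper gives about the construction is at the start of the proof of Proposition~\ref{multiplicities}, where it invokes a sequence $t_k\to 0$ with $\varphi(t_k v)/t_k\to d\varphi(v)$ and refers back to \cite{Sampaio:2014} for details. Your argument is precisely this construction carried out in full, and it is correct; it is also essentially the approach of the original reference. One small remark: since $\varphi$ is only a germ at $0$, the map $\varphi_t$ is defined on the dilated domain $t^{-1}U$ rather than on all of $\C^n$, so the Arzel\`a--Ascoli step should be phrased on a fixed ball $\overline{\B_R(0)}$, valid for all $t$ small enough that $t\,\overline{\B_R(0)}\subset U$; the diagonal extraction over $R\to\infty$ then goes through unchanged. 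Your use of the Curve Selection Lemma is genuinely needed here (and not merely convenient): the subsequence $t_k$ is fixed in advance, so to produce points of $X$ at the specific scales $t_k$ one needs an arc in $X$ tangent to $v$, which is exactly what curve selection supplies.
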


\subsection{Lelong numbers}\label{section:lelong}

Let $X\subset \C^n$ be a complex analytic set such that $0\in X$. Let $X_1,...,X_r$ be the irreducible components of $C(X)$.
Fix $j\in \{1,...,r\}$. For a generic point $x\in (X_j\cap \mathbb{S}^{2n-1})\times \{0\}$, the number of connected components of $\rho^{-1}(X\setminus\{0\})\cap U$ is constant, where $U$ is a sufficiently small open subset of $\C^n\times \R$ with $x\in U$, and we denote this number by $\kappa_X(X_j)$ (see definition of the $n_j$'s in \cite{Kurdyka:1989}, p. 762). Since, in the case of complex analytic germs, the notions of multiplicity and density coincide (see \cite{Draper:1969}), the numbers $\kappa_X(X_j)$ are the same Lelong numbers $n_j=n(X_j)$ defined by Kurdyka and Raby in \cite{Kurdyka:1989} and 
\begin{equation}\label{kurdyka-raby}
m(X)=\sum\limits_{i=0}^r \kappa_X(X_i)m(X_i).
\end{equation}
As another reference to $\kappa_X(X_j)$ numbers, see \cite{Birbrair:2015}, p. 7.

The following result shows the bi-Lipschitz invariance of the Lelong numbers.

\begin{proposition}\label{multiplicities}
Let $X,Y\subset\C^n$ be germs of complex analytic subsets at $0\in\C^n$, with pure dimension $p=\dim X=\dim Y$, and let $X_1,\dots,X_r$ and $Y_1,\dots,Y_s$ be the irreducible components of the tangent cones $C(X)$ and $C(Y)$ respectively. If there exists a bi-Lipschitz homeomorphism $\varphi\colon(\C^n,X,0)\to (\C^n,Y,0)$, then $r=s$ and, up to a re-ordering of indices,  $\kappa_X(X_j)=\kappa_Y(Y_j)$, $\forall \ j$.
\end{proposition}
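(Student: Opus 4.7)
The plan is to leverage Theorem~\ref{inv_cones} in order to promote $\varphi$ to a homeomorphism $\T\varphi\colon X'\to Y'$ between the spherical blow-ups, which sends $\partial X'$ onto $\partial Y'$ and whose boundary restriction is induced by the cone map $d\varphi$. Once such a $\T\varphi$ is available, both conclusions of the proposition will follow by pulling back, through $\T\varphi$, the topological data that defines $\kappa_X(X_j)$.

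Concretely, I would set
\[
\T\varphi(u,r) := \left(\frac{\varphi(ru)}{\|\varphi(ru)\|},\,\|\varphi(ru)\|\right)\text{ for }r>0,\quad\text{and}\quad \T\varphi(v,0) := \left(\frac{d\varphi(v)}{\|d\varphi(v)\|},\,0\right).
\]
Continuity away from $\partial X'$ is immediate. The main obstacle, and the crucial technical point, is continuity at the boundary: given $(v_i,r_i)\to(v,0)$ in $X'$ with $r_i>0$, I must show $\varphi(r_i v_i)/r_i\to d\varphi(v)$. Since $d\varphi$ is built as the pointwise limit $\lim_{t\to 0^+}\varphi(tx)/t$ and the rescalings $\varphi_t(x)=\varphi(tx)/t$ form a uniformly bi-Lipschitz family, an Arzel\`a--Ascoli argument upgrades pointwise to uniform convergence on the unit sphere, which is exactly what is needed. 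Applying the same construction to $\varphi^{-1}$ then shows that $\T\varphi$ is a homeomorphism; by construction it carries $X'\setminus\partial X'$ onto $Y'\setminus\partial Y'$, and under the identification $\partial X'\cong \Sp_0 X\times\{0\}$ its boundary restriction is the spherical quotient of $d\varphi$.

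With $\T\varphi$ available, the homeomorphism $d\varphi\colon C(X)\to C(Y)$ induces a bijection $\sigma$ between the irreducible components of the two tangent cones: for pure-dimensional complex analytic cones, the collection $\{X_j\cap\Sp^{2n-1}\}$ is detected topologically (for instance, as the closures of the connected components of $\Sp_0 X$ minus the union of pairwise intersections, which is a lower-dimensional subanalytic set) and is therefore preserved by the homeomorphism $\T\varphi|_{\partial X'}$. This gives $r=s$, and after relabelling I may assume $Y_j=d\varphi(X_j)$. For the Lelong numbers, recall that $\kappa_X(X_j)$ is the common value, on a generic open dense subset of $X_j\cap\Sp^{2n-1}$, of the integer-valued function $v\mapsto\#\{\text{connected components of }\rho^{-1}(X\setminus\{0\})\cap U\}$ for sufficiently small neighborhoods $U$ of $(v,0)$. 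Because $\T\varphi$ is a homeomorphism sending $\partial X'$ to $\partial Y'$ and $\rho^{-1}(X\setminus\{0\})$ to $\rho^{-1}(Y\setminus\{0\})$, this function is preserved under $\T\varphi|_{\partial X'}$. Choosing $v\in X_j\cap\Sp^{2n-1}$ generic for $X$ such that $\T\varphi(v,0)$ is also generic for $Y$ in $Y_j\cap\Sp^{2n-1}$ --- possible because both generic sets are open and dense in their respective boundary components and are matched by the homeomorphism $\T\varphi|_{\partial X'}$ --- I conclude $\kappa_X(X_j)=\kappa_Y(Y_j)$, as required.
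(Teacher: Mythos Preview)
Your approach has a genuine gap at precisely the point you flag as ``the crucial technical point'': the continuity of $\T\varphi$ at $\partial X'$. You assert that $d\varphi$ is ``built as the pointwise limit $\lim_{t\to 0^+}\varphi(tx)/t$'', but this is not how Theorem~\ref{inv_cones} produces it. The rescalings $\varphi_t(x)=\varphi(tx)/t$ form a uniformly bi-Lipschitz family, and Arzel\`a--Ascoli yields a \emph{subsequence} $t_k\to 0$ along which $\varphi_{t_k}\to d\varphi$ uniformly on compacta; the full limit need not exist, and in general it does not. For a concrete obstruction, let $R_\theta$ be a one-parameter family of rotations of $\C^n$ and set $\varphi(x)=R_{\epsilon\log\|x\|}(x)$ for small $\epsilon>0$: this is a bi-Lipschitz self-map of $(\C^n,0)$, but $\varphi_t=R_{\epsilon\log t}\circ\varphi_1$ spirals as $t\to 0^+$, and different sequences $t_k\to 0$ produce different tangent maps depending on the limit of $\epsilon\log t_k$ modulo $2\pi$. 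Consequently your boundary extension $\T\varphi(v,0)=\bigl(d\varphi(v)/\|d\varphi(v)\|,0\bigr)$ is not, in general, the continuous extension of $\T\varphi$ from the interior, and the argument that follows (transporting local connected-component counts through a homeomorphism of blow-ups) collapses.

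The paper's proof confronts this obstruction directly. It fixes one sequence $S=\{t_k\}$ realising $d\varphi$ and works with the discrete slice $SX=\{(u,t)\in\Sp^{2n-1}\times S:\,tu\in X\}$, on which $\rho^{-1}\circ\varphi\circ\rho$ does extend continuously to a map $\varphi'\colon\overline{SX}\to Y'$. But $\varphi'$ is no longer a homeomorphism onto $Y'$, so one cannot simply invoke topological invariance of the germ-count. Instead the paper takes a generic linear projection $\pi\colon\C^n\to\C^p$ and compares connected components of $\pi^{-1}(C_{\eta,\varepsilon}(v'))\cap X$ and of the corresponding set in $Y$: if $\kappa_X(X_j)>\kappa_Y(Y_j)$, two distinct $X$-components are forced by $\varphi'$ into a single $Y$-component $\T V$, whence sequences $t_kx_k,\,t_ky_k$ in those two components have images with inner distance $d_Y=o(t_k)$ (because $\T V$ is bi-Lipschitz to a cone over a simply connected base). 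The ambient bi-Lipschitz hypothesis then forces $d_X(t_kx_k,t_ky_k)=o(t_k)$, contradicting the lower bound $d_X\ge Ct_k$ coming from the fact that $t_kx_k$ and $t_ky_k$ lie in distinct sheets over $C_{\eta,\varepsilon}(\pi(v))$. This inner-distance comparison is the essential replacement for the blow-up homeomorphism you were hoping to construct.
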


Before starting the proof of the proposition, we do a slight digression to remind the notion of inner distance on a connected Euclidean subset.

Let $Z\subset\R^{\ell}$ be a path connected subset. Given two points $q,\tilde{q}\in Z$, we define the \emph{inner distance} in $Z$ between $q$ and $\tilde{q}$ by the number $d_Z(q,\tilde{q})$ below:
$$d_Z(q,\tilde{q}):=\inf\{ \mbox{length}(\gamma) \ | \ \gamma \ \mbox{is an arc on} \ Z \ \mbox{connecting} \ q \ \mbox{to} \ \tilde{q}\}.$$

\begin{proof}[Proof of Proposition \ref{multiplicities}.] Let $S=\{t_k\}_{k\in\N}$ be a sequence of positive real numbers such that 
$$t_k\to 0 \quad\mbox{and} \quad \frac{\varphi(t_kv)}{t_k}\to d\varphi(v)$$ 
where $d\varphi$ is a tangent map of $\varphi$ like in Theorem \ref{inv_cones} (for more details, see \cite{Sampaio:2014}, Theorem 3.2). Since, $d\varphi$ is a bi-Lipschitz homeomorphism, we get $r=s$ and there is a permutation $P\colon\{1,\dots,r\}\to \{1,\dots,s\}$ such that $d\varphi (X_i)=Y_{P(i)}$ $\forall \ i.$ This is why we can suppose $d\varphi(X_i)=Y_i$ $\forall \ i$ up to a re-ordering of indices. Let $$SX=\{(\x,t)\in \mathbb{S}^{2n-1}\times S;\,t\x \in X\}.$$ 
Thus, $\rho^{-1}\circ\varphi\circ\rho \colon SX\rightarrow Y'$ is an injective and continuous map that extends continuously to a map $\varphi'\colon \overline{SX}\to Y'.$
 
For each generic point $x\in \mathbb{S}_0 X_j\times\{0\}$, we know $\kappa_X(X_j)$ is the number of connected components of the set $\rho^{-1}(X\setminus \{0\})\cap B_{\delta }(x)$, for  $\delta>0$ small enough. Then, $\kappa_X(X_j)$ can be seen as the number of connected components of the set $(SX\cap \mathbb{S}^{2n-1}\times \{t_k\})\cap B_{\delta }(x)$, for $k$ large enough.

Let $\pi\colon\C^n\to \C^p$ be a linear projection such that 
$$\pi^{-1}(0)\cap(C(X)\cup C(Y))=\{0\}.$$ 
Let us denote the ramification locus of 
$$\pi_{| X}\colon X\to \C^p \quad \mbox{and} \quad \pi_{| C(X)}\colon C(X)\to \C^p$$ 
by $\sigma(X)$ and $\sigma(C(X))$ respectively.

Given a generic point $v'\in \C^p\setminus (\sigma(X)\cup \sigma(C(X)))$ (generic here means that $v'$ defines a direction not tangent to $\sigma(X)\cup \sigma(C(X))$), let $\eta,\varepsilon >0$ be sufficiently small such that 
$$C_{\eta,\varepsilon }(v')=\{w\in \C^p|\, \exists t>0; \|tv'-w\|<\eta t\}\cap \B_{\varepsilon }(0)\subset \C^p\setminus \sigma(X)\cup \sigma(C(X)).$$

The number of connected components of $\pi^{-1}(C_{\eta,\varepsilon }(v'))\cap X$ is exactly $m(X)$, since $C_{\eta,\varepsilon }(v')$ is simply connected and $\pi\colon  X\setminus \pi^{-1}(\sigma(X))\to \C^p\setminus\sigma(X)$ is a covering map. Then, we get that $\pi|_V\colon V\to C_{\eta,\varepsilon }(v')$ is bi-Lipschitz for each connected component $V$ of  $\pi^{-1}(C_{\eta,\varepsilon }(v'))\cap X$. Therefore, for each $j=1,\dots,r$, there are different connected components $V_{j1},\dots,V_{j\kappa_X(X_j)}$ of $\pi^{-1}(C_{\eta,\varepsilon }(v'))\cap X$ such that $C(\overline{V_{ji}})\subset X_j$, $i=1,...,\kappa_X(X_j)$.

Let us suppose that there is $j\in\{1,\dots,r\}$ such that $\kappa_X(X_j)>\kappa_Y(Y_j)$, it means that, if we consider a generic point $x=(v,0)\in\partial X'\cap X_j\times\{0\}$, there are at least two different connected components $V_{ji}$ and $V_{jl}$ of $\pi^{-1}(C_{\eta,\varepsilon }(\pi(v)))\cap X$ and sequences $\{(x_k,t_k)\}_{k\in \N}\subset \rho^{-1}(V_{ji})\cap SX$ and $\{(y_k,t_k)\}_{k\in \N}\subset \rho^{-1}(V_{jl})\cap SX$ such that $\lim (x_k,t_k)=\lim (y_k,t_k)=x$ and $\varphi'(x_k,t_k),\varphi'(y_k,t_k)\in \rho^{-1}(\T V_{jm})$, where $\T V_{jm}$ is a connected component of  $\pi^{-1}(C_{\eta,\varepsilon }(\pi(d\varphi(v))))\cap Y$.

Since $\varphi(t_kx_k),\varphi(t_ky_k)\in\T V_{jm}$ $\forall$ $k\in \N$ and $V=\T V_{jm}$ is bi-Lipschitz homeomorphic to $C_{\eta,\varepsilon }(\pi(d\varphi(v)))$, we have 
$$\|\varphi(t_kx_k)-\varphi(t_ky_k)\|=o(t_k)$$ 
and
$$d_Y(\varphi(t_kx_k),\varphi(t_ky_k))\leq d_V(\varphi(t_kx_k),\varphi(t_ky_k))=o(t_k).$$
Now, since  $X$ is bi-Lipschitz homeomorphic to $Y$, we have $d_X(t_kx_k,t_ky_k)\leq o(t_k)$.
On the other hand, since $t_kx_k$ and $t_ky_k$ lie in different connected components of $\pi^{-1}(C_{\eta,\varepsilon }(\pi(v)))\cap X$, there exists a constant $C>0$ such that $d_X(t_kx_k,t_ky_k)\geq Ct_k$, which is a contradiction.

We have proved that $\kappa_X(X_j)\leq \kappa_Y(Y_j)$, $j=1,...,r$. By similar arguments, using that $\varphi^{-1}$ is a bi-Lipschitz map, we also can prove $\kappa_Y(Y_j)\leq \kappa_X(X_j)$, $j=1,...,r$.
\end{proof}

\section{Bi-Lipschitz Invariance of the multiplicity}\label{section:mainresults}
Let $f\colon \C^n \to \C$ be a homogeneous polynomial with degree $\deg{f}=d$. Notice that, $\psi\colon \C^n\setminus f^{-1}(0)\to \C\setminus \{0\}$ defined by $\psi(x)=f(x)$ is a locally trivial fibration.  Moreover, we can choose, as geometric monodromy, the homeomorphism $h_f\colon F_f\to F_f$ given by $h_f(x)=e^{\frac{2\pi i}{d}}\cdot x$, where $F_f:=f^{-1}(1)$ is the (global) Milnor fiber of $f$ (see \cite{Milnor:1968}, \S 9). Recall that if $f$ has an isolated singularity at origin $0\in\C^n$, then the Euler characteristic of $F_f$ is given by 
\begin{equation}\label{euler-characteristic}
\chi(F_f)=1+(-1)^{n-1}(d-1)^n
\end{equation}

The next result shows the metric questions \~A1 and \~A2 are equivalent. In other words, to solve the Question \~A1, it is enough work on irreducible homogeneous polynomials.

\begin{theorem}\label{main-theorem}
Question \~A1 has positive answer if, and only if, Question \~A2 has  positive answer.
\end{theorem}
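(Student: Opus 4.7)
The direction \~A1 $\Rightarrow$ \~A2 is trivial because irreducible homogeneous polynomials are a particular case of reduced holomorphic function germs. So the plan is to establish \~A2 $\Rightarrow$ \~A1. Assume the positive answer of Question \~A2 and let $\varphi\colon(\C^n,V(f),0)\to(\C^n,V(g),0)$ be a bi-Lipschitz homeomorphism of pairs. I plan to reduce the computation of $m(V(f),0)$ and $m(V(g),0)$ entirely to data attached to the irreducible components of their tangent cones, so that Question \~A2 applies component by component.

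First, by Theorem \ref{inv_cones}, the pairs of tangent cones are bi-Lipschitz equivalent via a homeomorphism $d\varphi\colon(\C^n,C(V(f)),0)\to(\C^n,C(V(g)),0)$ of the ambient space. Write the initial forms as $f_m=\prod_i p_i^{a_i}$ and $g_{m'}=\prod_j q_j^{b_j}$ with $p_i,q_j$ irreducible homogeneous. Then, as sets, $C(V(f))=\bigcup_i V(p_i)$ and $C(V(g))=\bigcup_j V(q_j)$, and these are the decompositions into (complex analytic) irreducible components. Applying Proposition \ref{multiplicities} to the bi-Lipschitz map $\varphi$, I obtain that the number of irreducible components agrees and that there is a permutation $P$ of the indices such that, after re-ordering, $d\varphi(V(p_i))=V(q_{P(i)})$ and
\[
\kappa_{V(f)}(V(p_i))=\kappa_{V(g)}(V(q_{P(i)}))\quad \forall \ i.
\]

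Next I use Question \~A2 on each index separately. The restriction of $d\varphi$ to each irreducible component of the tangent cone comes from a global bi-Lipschitz homeomorphism of $\C^n$ that carries $V(p_i)$ onto $V(q_{P(i)})$. Since $p_i$ and $q_{P(i)}$ are irreducible homogeneous polynomials, the assumed positive answer to Question \~A2 gives
\[
m(V(p_i),0)=m(V(q_{P(i)}),0),\quad \forall \ i.
\]
Because $V(p_i)$ is an irreducible homogeneous hypersurface, its multiplicity at the origin equals $\deg p_i$; the same holds for $V(q_{P(i)})$.

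Finally, I combine these two pieces via the Kurdyka--Raby identity (\ref{kurdyka-raby}):
\[
m(V(f),0)=\sum_i \kappa_{V(f)}(V(p_i))\cdot m(V(p_i),0)=\sum_i \kappa_{V(g)}(V(q_{P(i)}))\cdot m(V(q_{P(i)}),0)=m(V(g),0),
\]
which finishes the proof. The main conceptual obstacle is not the algebraic bookkeeping but guaranteeing that $d\varphi$ actually induces a bi-Lipschitz equivalence of pairs on each individual irreducible component of the tangent cone; this is precisely what Theorem \ref{inv_cones} together with the permutation statement extracted from the proof of Proposition \ref{multiplicities} provides. Once that is in hand, the reduction to Question \~A2 and the application of the Kurdyka--Raby formula are straightforward.
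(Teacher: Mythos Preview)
Your proposal is correct and follows essentially the same route as the paper: reduce to the tangent cones via Theorem \ref{inv_cones}, match their irreducible components and Lelong numbers via Proposition \ref{multiplicities}, apply the assumed answer to Question~\~A2 componentwise using the ambient bi-Lipschitz map $d\varphi$, and conclude with the Kurdyka--Raby formula (\ref{kurdyka-raby}). Your write-up is slightly more explicit than the paper's about why $d\varphi$ furnishes, for each component, a bi-Lipschitz homeomorphism of \emph{pairs} $(\C^n,V(p_i),0)\to(\C^n,V(q_{P(i)}),0)$, which is exactly the hypothesis Question~\~A2 requires.
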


\begin{proof} Obviously, we just need to prove that positive answer to Question \~A2 implies positive answer to Question \~A1.   Let $f,g\colon (\C^n,0)\to (\C,0)$ be two reduced analytic function-germs. Let us suppose that $X=V(f)$, $Y=V(g)$ and $\varphi\colon (\C^n,X,0)\to (\C^n,Y,0)$ is a bi-Lipschitz homeomorphism. Let us denote  by $X_1,\dots,X_r$ and $Y_1,\dots,Y_s$ the irreducible components of the cones $C(X)$ and $C(Y)$ respectively. It comes from Proposition \ref{multiplicities} that $r=s$ and the bi-Lipschitz homeomorphism $d\varphi \colon (\C^n,C(X),0)\rightarrow(\C^n,C(Y),0)$, up to re-ordering of indices, sends $X_i$ onto $Y_i$ and $\kappa_X(X_i)=\kappa_Y(Y_i)$ $\forall$ $i$.

We know that  $X_i$ and $Y_i$ are zero sets of irreducible homogeneous polynomials $f_i$ and $g_i$ respectively. Since, Question \~A2 has positive answer, we get $m(X_i)=m(Y_i)$ $\forall \ i$. Finally, using Eq. \ref{kurdyka-raby}, we get $m(X)=m(Y)$.
\end{proof}

\subsection{Applications of Theorem} \label{subsection:applications}

\bigskip

We denote by $\mathcal{C}$ the set of all complex analytic germs $X\subset \C^n$, at origin $0\in\C^n$,  such that all components of $C(X)$ have isolated singularities.

\begin{theorem}\label{application1}
Let $f,g\colon (\C^n,0)\to (\C,0)$ be two reduced analytic function-germs. Suppose that $V(f)\in \mathcal{C}$.  
If there is a bi-Lipschitz homeomorphism $\varphi\colon (\C^n,V(f),0)\to (\C^n,V(g),0)$, then $V(g)\in \mathcal{C}$ and $m(V(f))=m(V(g))$.
\end{theorem}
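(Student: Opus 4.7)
The plan is to reduce the problem, via the argument of Theorem~\ref{main-theorem}, to a componentwise comparison of Milnor fibers at each irreducible piece of the tangent cone.

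By Theorem~\ref{inv_cones} there is a bi-Lipschitz homeomorphism $d\varphi\colon(\C^n,C(V(f)),0)\to(\C^n,C(V(g)),0)$, and by Proposition~\ref{multiplicities} the irreducible components can be reindexed so that $d\varphi(X_i)=Y_i$ and $\kappa_X(X_i)=\kappa_Y(Y_i)$ for each $i$. Writing $X_i=V(f_i)$ and $Y_i=V(g_i)$ with $f_i,g_i$ irreducible homogeneous, the hypothesis $V(f)\in\mathcal{C}$ means every $X_i$ has an isolated singularity at $0$. In view of \eqref{kurdyka-raby}, it suffices to show that each $Y_i$ also has isolated singularity at $0$ and that $\deg f_i=\deg g_i$.

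Fix $i$. The restriction $d\varphi\colon(\C^n,X_i,0)\to(\C^n,Y_i,0)$ is a bi-Lipschitz, and hence topological, equivalence of ambient hypersurface germs. Invoking L\^e's topological invariance of the Milnor fibration, for any point $q$ near $0$ the Milnor fiber of $g_i$ at $q$ is homotopy equivalent to the Milnor fiber of $f_i$ at $d\varphi^{-1}(q)$. For $q\in Y_i\setminus\{0\}$, the preimage $d\varphi^{-1}(q)$ is a smooth point of $X_i$, so the Milnor fiber of $f_i$ there is contractible; since a complex hypersurface is smooth at a point precisely when its Milnor fiber there is contractible, $g_i$ must be smooth at $q$. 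Therefore $Y_i$ has isolated singularity at $0$, proving $V(g)\in\mathcal{C}$. Applying the same invariance at the origin yields $\chi(F_{f_i})=\chi(F_{g_i})$; now that both singularities are isolated, \eqref{euler-characteristic} gives $(\deg f_i-1)^n=(\deg g_i-1)^n$, hence $\deg f_i=\deg g_i$. Summing $\kappa_X(X_i)\deg f_i=\kappa_Y(Y_i)\deg g_i$ over $i$ via \eqref{kurdyka-raby} finally produces $m(V(f))=m(V(g))$.

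The main obstacle is cleanly invoking the topological invariance of the Milnor fibration under a merely bi-Lipschitz (rather than analytic) ambient homeomorphism of germs, together with the classical equivalence between contractibility of the Milnor fiber and smoothness of the hypersurface. Once these two ingredients are in hand, the remainder of the argument is essentially a bookkeeping application of Proposition~\ref{multiplicities} and the Kurdyka--Raby formula \eqref{kurdyka-raby}.
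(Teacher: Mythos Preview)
Your argument is correct and follows the same overall strategy as the paper: reduce to irreducible homogeneous components of the tangent cones via Proposition~\ref{multiplicities} and the Kurdyka--Raby formula~\eqref{kurdyka-raby}, show that each $Y_i$ also has an isolated singularity, and then read off $\deg f_i=\deg g_i$ from the Euler-characteristic formula~\eqref{euler-characteristic} for the Milnor fiber. The paper packages the first reduction by simply invoking Theorem~\ref{main-theorem}, whereas you unpack that reduction inline; this is a cosmetic difference.

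The one substantive variation is in how you establish that $Y_i$ has an isolated singularity. The paper appeals directly to the result (Theorem~4.2 of \cite{Sampaio:2014}, Theorem~3.1 of \cite{Birbrair:2014}) that a bi-Lipschitz homeomorphism between analytic germs carries the singular locus onto the singular locus; since $d\varphi|_{X_i}\colon X_i\to Y_i$ is bi-Lipschitz and $\Sing{X_i}=\{0\}$, this immediately gives $\Sing{Y_i}=\{0\}$. You instead argue via the ambient pair homeomorphism $d\varphi\colon(\C^n,X_i)\to(\C^n,Y_i)$, L\^e's topological invariance of the Milnor fibration at each nearby point, and the A'Campo-type characterization ``smooth $\Leftrightarrow$ Milnor fiber contractible''. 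Both routes are valid; the paper's is shorter and avoids the detour through A'Campo, while yours has the virtue of using only the Milnor-fibration machinery that you need anyway for the Euler-characteristic comparison at the origin. Note also that your stated ``main obstacle'' is not really one: L\^e's theorem requires only a homeomorphism of germs of pairs, which a bi-Lipschitz homeomorphism certainly provides.
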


\begin{proof} By Theorem \ref{main-theorem}, we can suppose that $f$ and $g$ are irreducible homogeneous polynomials, with degrees $d$ and $e$ respectively, and $f$ has an isolated singularity at origin $0\in\C^n$. As a consequence of Theorem 4.2 in \cite{Sampaio:2014} (see also \cite{Birbrair:2014}, Theorem 3.1), bi-Lipschitz homeomorphisms between two analytic germs send singular subsets onto singular subsets. This is why we claim $g$ has an isolated singularity at origin as well. Now, let us to show $m(V(f),0)=m(V(g),0)$, i.e. $d=e$. It comes from Eq. \ref{euler-characteristic} that 
$$\chi(F_f)=1+(-1)^{n-1}(d-1)^n \quad \mbox{and} \quad \chi(F_g)=1+(-1)^{n-1}(e-1)^n.$$ 
Since $\chi(F_f)=\chi(F_g)$, it follows that $d=e$.
\end{proof}
The next result shows that, in the case of surface singularities in $\C^3$, we do not need any restriction to prove the multiplicity is a bi-Lipschitz invariant.

\begin{theorem}\label{application2} Let $f,g\colon (\C^3,0)\to (\C,0)$ be two reduced analytic function-germs.   
If there is a bi-Lipschitz homeomorphism $\varphi\colon (\C^3,V(f),0)\to (\C^3,V(g),0)$, then $m(V(f))=m(V(g))$.
\end{theorem}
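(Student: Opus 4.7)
The plan is to combine Theorem \ref{main-theorem}, a Milnor-fibre computation, and the Plücker genus bound for plane curves. By Theorem \ref{main-theorem}, it is enough to treat irreducible homogeneous polynomials $f,g\in\C[x_1,x_2,x_3]$ of degrees $d$ and $e$ with a bi-Lipschitz homeomorphism $\varphi\colon (\C^3,V(f),0)\to (\C^3,V(g),0)$, and to conclude $d=e$. Write $C_f,C_g\subset\CP^2$ for the irreducible projective plane curves defined by $f$ and $g$. Since $V(f)$ is the affine cone on $C_f$, its singular locus in $\C^3$ is the union of the (finitely many) complex lines through the origin over the points of $\Sing{C_f}$, and likewise for $V(g)$.

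The first step I would establish is that the total Milnor number $\mu:=\sum_{p\in\Sing{C_f}}\mu_p(C_f)$ is a bi-Lipschitz invariant of the germ $(\C^3,V(f),0)$. By Sampaio's theorem (Theorem~4.2 of \cite{Sampaio:2014}), $\varphi$ sends $\Sing{V(f)}$ onto $\Sing{V(g)}$, inducing a bijection between the singular lines. At a point $q\neq 0$ on a singular line $\ell$ of $V(f)$, the $\C^*$-action on $V(f)$ gives topological equisingularity along $\ell$, so the pair $(\C^3,V(f),q)$ is homeomorphic to the cylinder pair $(\C^3,\C\times A_p,0)$, where $A_p\subset\C^2$ is the plane curve germ of $C_f$ at the corresponding singular point $p$. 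The ambient bi-Lipschitz germ of $\varphi$ therefore yields a homeomorphism of such cylinder pairs; slicing by a small complex $2$-disk transverse to the singular line extracts a homeomorphism of plane curve germs $A_p\simeq A_{p'}$. Since the Milnor number of a reduced plane curve germ is a topological invariant, $\mu_p(C_f)=\mu_{p'}(C_g)$ at matched singular points, and summing gives $\mu_{\rm tot}(C_f)=\mu_{\rm tot}(C_g)=\mu$.

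The second step is the classical Milnor-fibre formula for a homogeneous hypersurface. The map $F_f\to\CP^2\setminus C_f$, $x\mapsto[x]$, is an unramified cyclic $d$-fold covering, because for any $[x]\notin C_f$ the equation $\lambda^d f(x)=1$ has exactly $d$ solutions; thus $\chi(F_f)=d(3-\chi(C_f))$. Combining the genus-degree formula on the normalisation of $C_f$ with Milnor's local formula $\mu_p=2\delta_p-r_p+1$ yields $\chi(C_f)=2-(d-1)(d-2)+\mu$, whence
\[
\chi(F_f)=(d-1)^3+1-d\mu, \qquad \chi(F_g)=(e-1)^3+1-e\mu.
\]
Since $f$ is irreducible, $H^1(\Sp^5\setminus L_f;\Z)\cong\Z$ and the Milnor fibration class is a canonical generator determined by the link pair $(\Sp^5,L_f)$; the germ homeomorphism underlying $\varphi$ therefore induces a homotopy equivalence of Milnor fibres, so in particular $\chi(F_f)=\chi(F_g)$.

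Equating the two expressions and assuming $d\neq e$, dividing by $d-e$ gives
\[
\mu=(d-1)^2+(d-1)(e-1)+(e-1)^2.
\]
However, for an irreducible projective plane curve of degree $d$ the Plücker/genus bound forces $\mu\leq(d-1)(d-2)$, while a direct computation shows
\[
(d-1)^2+(d-1)(e-1)+(e-1)^2-(d-1)(d-2)=e(d-1)+(e-1)^2>0
\]
whenever $d,e\geq 1$ and $d\neq e$; this is a contradiction. Hence $d=e$, which is exactly $m(V(f),0)=m(V(g),0)$. I expect the main obstacle to be the clean extraction of the transverse plane-curve type from the ambient bi-Lipschitz germ in the second paragraph: this rests on topological equisingularity of a homogeneous hypersurface along its singular lines, together with topological invariance of the Milnor number of a plane curve germ; the other ingredients (the $d$-fold cover computation, the Plücker bound, and the final inequality) are essentially formal.
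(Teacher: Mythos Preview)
Your overall strategy is sound and the endgame is genuinely different from the paper's. Both proofs reduce via Theorem~\ref{main-theorem} to irreducible homogeneous $f,g$ of degrees $d,e$, identify the L\^e--Iomdin formula $\chi(F_f)=(d-1)^3+1-d\mu$ with $\mu=\sum_i\mu_i'(f)$ (your $d$-fold cover computation is exactly Eq.~\eqref{le-iomdin}), and use the ambient-topological invariance of $\chi(F_f)$ and of $\mu$. From there the paper splits into cases: if $\chi(F_f)=0$ it observes that $d$ and $e$ are both roots $>1$ of the quadratic $t^2-3t+3-\mu=0$; if $\chi(F_f)\neq 0$ it uses the geometric monodromy $h_f(x)=e^{2\pi i/d}x$ and the topological invariance of the Lefschetz numbers $\Lambda(h_f^k)$ to recover $d$ as the least $k$ with $\Lambda(h_f^k)\neq 0$. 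Your Pl\"ucker bound $\mu\le(d-1)(d-2)$ for an irreducible plane curve handles both cases at once, with the clean inequality $(d-1)^2+(d-1)(e-1)+(e-1)^2>(d-1)(d-2)$ whenever $d\neq e$; this avoids the monodromy machinery entirely and is a genuine simplification of the final step.

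The one real gap is your ``slicing'' step. A homeomorphism of pairs $(\C^3,\C\times A_p,0)\to(\C^3,\C\times A_{p'},0)$ does \emph{not} restrict to a homeomorphism of transverse $2$-discs, so you cannot literally extract a homeomorphism of plane-curve germs $A_p\simeq A_{p'}$ this way; the map may scramble the $\C$-factor with the transverse directions. What does work --- and what the paper invokes via L\^e \cite{Le:1973b} (see also \cite{Le:1973}, \cite{Massey:2007}) --- is the ambient-topological invariance of the local Milnor fibration: the germ homeomorphism at $q$ forces the local Milnor fibres to be homotopy equivalent, and the local Milnor fibre of $h_p(y,z)$ viewed as a function on $\C^3$ is homotopy equivalent to the $2$-dimensional Milnor fibre of $h_p$, whose Euler characteristic is $1-\mu_p$. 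That yields $\mu_p=\mu_{p'}$ at matched singular lines, hence $\mu_{\rm tot}(C_f)=\mu_{\rm tot}(C_g)$, after which your Pl\"ucker argument goes through unchanged. So replace the slicing sentence by an appeal to L\^e's invariance theorem and the proof is complete.
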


\begin{proof} By Theorem \ref{main-theorem}, we can suppose that $f$ and $g$ are irreducible homogeneous polynomials with degree $d$ and $e$ respectively. By Theorem \ref{application1}, we can suppose that $d,e>1$ and the singular sets $\Sing{V(f)}$ and $\Sing{V(g)}$ are one-dimensional sets. Let us denote by $C_1\dots,C_r$ and $D_1,\dots,D_r$ the irreducible components of $\Sing{V(f)}$ and $\Sing{V(g)}$ respectively. Then, we denote by $b_i(f)$ (respectively $b_i(g)$) the $i$-th Betti number of the Milnor fiber of $f$ (respectively $g$) at the origin, $\mu_j'(f)$ (respectively $\mu_j'(g)$) is the Milnor number of a generic hyperplane slice of $f$ (respectively $g$) at $x_j\in C_j\setminus \{0\}$ (respectively $y_j\in D_j\setminus\{0\}$) sufficiently close to the origin. According to  Theorem 5.11 in \cite{Randell:1979} (see also \cite{Massey:1995}, p. 39, Theorem 3.3 and p. 49, Corollary 4.7), we have the following equations

\begin{equation}\label{le-iomdin}
(d-1)^3=\chi (F_f)-1+d\sum\limits_{i=1}^r\mu_i'(f) \quad \mbox{and} \quad (e-1)^3=\chi (F_g)-1+e\sum\limits_{i=1}^r\mu_i'(g)
\end{equation} 

\bigskip

\noindent{\tt Claim 1.}{\it If $\chi (F_f)=0$, then $d=e$}.

\bigskip
It is valuable to note that we can not skip this step of the proof, because there are examples where $\chi (F_f)=0$, for instance, $f(z_1,z_2,z_3)=z_1z_2z_3$.

\bigskip
\noindent{\it Proof of the Claim 1.} If $\chi (F_f)=0$, then $\chi (F_g)=0$ as well. From Eq. \ref{le-iomdin}, we obtain the following versions of L\^e-Iomdin's formula (see \cite{Iomdin:1974} and \cite{Le:1980}):
$$d^2-3d+3-\sum\limits_{i=1}^r\mu_i'(f)=0 \quad \mbox{and} \quad  e^2-3e+3-\sum\limits_{i=1}^r\mu_i'(g)=0.
$$
On the other hand, according to  \cite{Le:1973b}, Proposition and Th\'eor\`eme 2.3, $q=\sum\limits_{i=1}^r\mu_i'(f)=\sum\limits_{i=1}^r\mu_i'(g)$. Hence, $d$ and $e$ are solutions of the equation 
$$t^2-3t+3-q=0.$$ 
Since this equation has only one solution greater than $1$, it follows that $d=e$.

\hfill{\it End of the proof of the Claim 1.}

\bigskip

From Claim 1, we can suppose that $\chi(F_f)\neq 0$. Thus, $\chi(F_g)\neq 0$ as well.

\bigskip

\noindent{\tt Claim 2.}{\it If $0<k<d$ (respectively $0<k<e$), then $\Lambda(h_f^k)=0$ (respectively $\Lambda(h_g^k)=0$), where $\Lambda(h_f^k)$ (respectively $\Lambda(h_g^k)$) denotes the Lefschetz number of $h_f^k$ (respectively $h_g^k$).}

\bigskip

\noindent{\it Proof of the Claim 2.} We start this proof using the  Topological  Cylindric Structure at Infinity of Algebraic Sets (see \cite{Dimca:1992}, p. 26, Theorem 6.9) to justify that $F=f^{-1}(1)$ has the same homotopy type of $F_R=F\cap \{x\in \C^n;\, \|x\|\leq R\}$, for $R$ large enough. We see the geometric monodromy  $h_f\colon F\to F$ given by $h_f(x)=e^{\frac{2\pi i}{d}}x$, restricted to $F_R$, induces a map $h=h_f|_{F_R}\colon F_R\to F_R$. It is clear $h^k$ does not have fixed point, for $0<k<d$, hence  $\Lambda(h^k)=0$. Since $h_f$ is homotopy equivalent to $h$,  $\Lambda(h_f^k)=0$, for any $0<k<d$. 

\hfill{\it End of the proof of the Claim 2.}

\bigskip

Now, we are ready to finish the proof of the theorem. We know $\chi(F_f)=\chi(F_g)\neq 0$ and, from homotopy invariance of the monodromy (see Theorem 3.3 and remark 3.4 in \cite{Le:1973} or Theorem 1.15 in \cite{Massey:2007}), $\Lambda(h_f^k)=\Lambda(h_g^k)$, for all $k\in \N$. Since, $f$ and $g$ are homogeneous polynomials with degrees $d$ and $e$ respectively, $h_f^d=id \colon  F_f\rightarrow F_f$ and $h_g^e=id \colon  F_g\rightarrow F_g$, hence $\Lambda(h_f^d)=\chi(F_f)\neq 0$ and $\Lambda(h_g^e)=\chi(F_g)\neq 0$. Thus, it follows from Claim 2 that $d=e$.
\end{proof}

\end{document}